\let\SavedRightarrow=\Rightarrow
\let\Rightarrow=\SavedRightarrow
\newcommand{\cov}{\operatorname{cov}}
\newcommand{\Aaa }{\mathcal A}
\newcommand{\Raa }{\mathcal R}
\newcommand{\Bee }{\mathcal B}
\newcommand{\Oee }{\mathcal O}
\newcommand{\Pee }{\mathcal P}
\newcommand{\Fee }{\mathcal F}
\newcommand{\nat }{\mathbb N}
\newcommand{\Tee }{\mathcal T}
\newcommand{\cl}{\operatorname{cl}}
\renewcommand{\int}{\operatorname{Int}}
\newtheorem{theorem}{Theorem}
\newtheorem{corollary}[theorem]{Corollary}
\newtheorem{remark}{Remark}
\author{Piotr Kalemba} 
\address{ Piotr Kalemba\\  Institute of Mathematics, University of Silesia \\  Bankowa 14, 40-007 Katowice} \email{piotr.kalemba@us.edu.pl} 
\author{Andrzej Kucharski} 
\address{Andrzej Kucharski \\  Institute of Mathematics, University of Silesia \\  Bankowa 14, 40-007 Katowice} \email{akuchar@math.us.edu.pl}
\begin{document}

\title{ Universally Kuratowski-Ulam spaces and  open-open games} 
\subjclass[2000]{Primary: 54G20,  91A44; Secondary: 54F99}
\keywords{ II-favorable space, uK-U space, tiny sequence }
\thanks{Andrzej Kucharski thanks European Science
Foundation for their support through the grant
3007 within the INFTY program.}
\begin{abstract}
We examine the class of  spaces in which the second player has a winning strategy in the open-open game. We show that this spaces are not  universally Kuratowski-Ulam. We also show that the games $G$ and $G_7$ introduced by  P. Daniels, K. Kunen, H. Zhou \cite{dkz} are not equivalent.
\end{abstract}

\maketitle

\section{Introduction}

First we shall recall some game introduced in \cite{dkz} called $G_2$.  Let  $X$ be a topological space equipped with a topology $\Tee$ and let $\Bee\subseteq \Tee$ be its base. The length of the game is $\omega$. 
Two players I and II take turns playing. At the n-th move II chooses a family $\Pee_n$ 
consisting of open non-empty subset of $X$ such that $\cl\bigcup\Pee_n=X$, then I picks a $V_n\in\Pee_n$.
I wins iff $\cl\bigcup_{n\in\omega} V_n=X$. Otherwise player II  wins. 
Denote by $D_{cov}$ a collection of families $\mathcal F$ consisting  of open sets with $\cl\bigcup\mathcal F= X$. We say that $\sigma_{cov}:(\bigcup D_{cov})^{<\omega}\to D_{cov}$ is a \textit{winning strategy for player II } in the  game $G_2$ whenever, for any sequence $U_0, U_1, \ldots\;$  consisting of non-empty open subsets with  
$  U_0\in\sigma_{cov}(\emptyset)=\Pee_0\in D_{cov}$ and   $U_n\in\sigma_{cov}(U_0, U_1, \ldots, U_{n-1})=\Pee_n\in D_{cov} $, for all $n\in \omega$,  there holds  $\cl\bigcup_{n\in\omega} U_n\not=X$. 

 In the paper \cite{dkz} the authors introduced an open-open game. We say that $G$ is \textit{an open-open game} of length $\omega$ if two players take turns playing; a round consists of player I choosing a non-empty open set $U\subseteq X$ and player II choosing a non-empty open $V\subseteq U$; I wins if the union of II's open sets is dense in $X$, otherwise II wins. Suppose that  there exists    a  function 
$$s_{op} :\bigcup \{ \Tee^n: n\geq 0\} \to \Tee $$  such that
 for each sequence  $V_0, V_1, \ldots\;$ consisting of non-empty elements of $\Tee$ with  
$  s_{op}(V_0)\subseteq V_0$ and   $s_{op}(V_0, V_1, \ldots, V_n)\subseteq V_{n} $, for all $n\in \omega$,  there holds  $\cl\bigcup_{n\in\omega} V_n\not=X$. Then the function $s_{op}$ is called a  \textit{winning strategy for II player} in the open-open game and we say that the space $X$ is \textit{II-favorable}.  

It is known \cite{dkz} that the open-open game $G$ is equivalent  to $G_2$. 
We consider only  games with the length equal to $\omega$. 
 In \cite{dkz} the authors introduced a game $G_7$ which is played as follows:  In the $n$-th inning II chooses
$\Oee_n$, a family of open sets with $\bigcup \Oee_n$  dense in $X$. I responds with $\mathcal{T}_n$,
a finite subfamily of $\Oee_n$; I wins  if
$\bigcup_{n\in\omega} \mathcal{T}_n$ is dense subset of $X$; otherwise, II wins. 

 According to A. Szyma\'{n}ski \cite{szy}  a sequence $\{\Pee_n:n\in\omega\}$  
of open families in $X$  is a \textit{tiny sequence} if 

\begin{enumerate}
	\item $\bigcup\Pee_n$ is dense in $X$ for all $n\in\omega$
	\item if $\mathcal F_n$ is a finite subfamily of $\Pee_n$ for each $n\in\omega$ then  $\bigcup\{\bigcup\mathcal F_{n}:n\in\omega\}$ is not dense in $X$.
\end{enumerate}

We call  a sequence $\{\Pee_n:n\in\omega\}$ of open families in $X$  a \textit{ 1-tiny sequence} if 

\begin{enumerate}
	\item $\bigcup\Pee_n$ is dense in $X$ for all $n\in\omega$
	\item if $F_n$ is a member of $\Pee_n$ for each $n\in\omega$ then  $\bigcup\{ F_{n}:n\in\omega\}$ is not dense in $X$.
\end{enumerate}

  M. Scheepers  used in the paper \cite{sch} negation of the existence of tiny sequence, and 1-tiny sequence - called these properties $S_{fin}(\mathcal D,\mathcal D)$ and $S_{1}(\mathcal D,\mathcal D)$ respectively. In this paper we refer to  notions tiny sequence and 1-tiny sequence, because in some situations
(Theorem  \ref{g7iffts} and \ref{g2iff1ts}) we can define them.
 
  Recall another game $G_4$ introduced in \cite{dkz}.  In the $n$-th inning player I chooses finite open family $\Aaa_n$. Player II responds with a finite, open family $\Bee_n$ with $|\Bee_n|=|\Aaa_n|$ and for each $V\in\Aaa_n$ there exists $W\in\Bee_n$ such that $W\subseteq V$. I wins if
$\bigcup_{n\in\omega} \bigcup\Bee_n$ is dense subset of $X$; otherwise, II wins. One can prove that the game $G_7$ is equivalent to the game $G_4$ in a way similar to the proof of the equivalence between games $G$ and $G_2$.

From now on we consider only c.c.c. spaces.

\begin{theorem}[M. Scheepers, Theorem 2 \cite{sch} ]\label{g7iffts}
II has a winning strategy in the game $G_7$ if and only if there exists a tiny sequence.
\end{theorem}

\begin{theorem}[M. Scheepers, Theorem 14 \cite{sch} ]\label{g2iff1ts}
Player II has a winning strategy in the game $G_2$ if and only if there exists a 1-tiny sequence.
\end{theorem}

\section{ The main results}

Recall that  $X$ is called a \textit{II-favorable} space if  player II a has winning strategy in the game G. If 
player I has a winning strategy in the game G then we say that the space is \textit{I-favorable}.

The following theorem was proven  by K. Kuratowski and S. Ulam, see \cite{ku}. 
In order to formulate it, let us recall that: a $\pi$-base is a family of open, nonempty sets such that any open set contains a set from this family, and the
$\pi$-weight of a space is the smallest cardinality of a $\pi$-base in this space. 
 \\ \indent 
\textit{Let $X$ and $Y$ be topological spaces such that $Y$ has countable $\pi$-weight. If $E\subseteq X\times Y$ is a nowhere dense set, then there is  a meager set $P\subseteq X$ such that the section $E_x= \{ y: (x,y) \in E \}$
 is nowhere dense in $Y$ for each point $x\in X\setminus P$}.

A space $Y$ is \textit{universally Kuratowski-Ulam} (for short, \textit{uK-U space}), whenever for a topological space $X$ and a nowhere dense  set $E\subseteq X\times Y$ the set $$ \{x\in X: \{y\in Y: (x,y)\in E\} \mbox{ is not nowhere dense  in } Y\}$$ is meager in $X$, 
   see D. Fremlin \cite{fem} (compare \cite{fnr}).  
   In the paper \cite{kp7} authors have shown that a compact I-favorable  space is universally Kuratowski-Ulam and  posed a question:
    \textit{Does there exist  a compact universally Kuratowski-Ulam space which is not 
 I-favorable}?
 We will partially answer to this question, namely we will prove that a II-favorable space is not universally Kuratowski-Ulam space.
 
 \begin{theorem}
Let $X$ be  a dense in itself space with a $\pi$-base $\Bee=\bigcup_{n\in\omega}\Bee_n$, where $\Bee_n$ is a maximal family of pairwise disjoint open sets for $n\in\omega$ and let  $Y$ be II-favorable c.c.c. space.  Then the 
Kuratowski-Ulam theorem does not hold in $X\times Y$.  
 \end{theorem}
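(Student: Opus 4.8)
The plan is to produce a closed nowhere dense set $E\subseteq X\times Y$ such that the set of $x\in X$ for which $E_x$ is not nowhere dense in $Y$ is residual in $X$, hence non-meager; this directly contradicts the Kuratowski--Ulam conclusion.

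First I would extract combinatorial data. Since $Y$ is II-favorable, II wins the open-open game $G$ on $Y$; as $G$ and $G_2$ are equivalent, II wins $G_2$, so Theorem~\ref{g2iff1ts} yields a $1$-tiny sequence $\{\Pee_n:n\in\omega\}$ in $Y$. Because $Y$ is c.c.c.\ I may replace each $\Pee_n$ by a countable maximal disjoint subfamily: its union is still dense and $1$-tininess survives, so from now on each $\Pee_n$ is countable. On the $X$-side I first replace $\Bee_n$ by $\{B_0\cap\dots\cap B_n : B_i\in\Bee_i\}\setminus\{\varnothing\}$, again a maximal disjoint family, with $\bigcup_k\Bee_k$ still a $\pi$-base; now the $\Bee_n$ are nested. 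Since $X$ is dense in itself, these nested $\pi$-dense antichains must keep splitting, so for every nonempty open $C\subseteq X$ the number $|\{B\in\Bee_n : B\subseteq C\}|$ tends to $\infty$ with $n$; by passing to a subsequence of the $\Bee_n$ (legitimate by nestedness) I arrange this growth to be as rapid as I need, uniformly over the countably many $C$ in the $\pi$-base $\bigcup_k\Bee_k$. Put $X_0:=\bigcap_n\bigcup\Bee_n$, a residual subset of $X$, and for $x\in X_0$ let $B_n(x)$ denote the unique member of $\Bee_n$ containing $x$.

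Next I would choose, for each $n$ and each $B\in\Bee_n$, a nonempty open $F(n,B)\subseteq$ (some member of $\Pee_n$), and set $W:=\bigcup_n\bigcup_{B\in\Bee_n}\bigl(B\times F(n,B)\bigr)$, which is open, and $E:=(X\times Y)\setminus W$, which is closed. The vertical sections are the easy half: for $x\in X_0$ the fibre is $W_x=\bigcup_n F(n,B_n(x))$, which is contained in $\bigcup_n Q_n$ for suitable $Q_n\in\Pee_n$, hence not dense in $Y$ by the $1$-tininess of $\{\Pee_n\}$; therefore $E_x=Y\setminus W_x$ has nonempty interior and is not nowhere dense. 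So the exceptional set contains $X_0$ and is non-meager, whatever choice of the $F(n,B)$ is made.

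The hard half --- the main obstacle --- is to choose the $F(n,B)$ so that $E$ is nowhere dense, equivalently (as $E$ is closed) so that $W$ is dense in $X\times Y$. Testing against the $\pi$-base $\bigcup_k\Bee_k$ of $X$, this requires: for every $C\in\bigcup_k\Bee_k$ the family $\{F(n,B) : n\in\omega,\ B\in\Bee_n,\ B\subseteq C\}$ has dense union in $Y$. I would build $F$ by a level-by-level bookkeeping, and the three hypotheses are exactly the ingredients that make this possible: $Y$ c.c.c.\ provides countable families with dense union to aim at; II-favorability provides a $1$-tiny sequence $\{\Pee_n\}$ which --- since only $G$, not $G_7$, is being assumed --- we may take to be \emph{not} tiny, so that for each $n$ finitely many members of $\Pee_n$ already combine over all $n$ into a dense set; and dense-in-itselfness of $X$ provides (by the subsequence step above) arbitrarily many pairwise disjoint basic open subsets of $X$ at suitably deep levels below every $C$, into which those finite subfamilies of the $\Pee_n$'s can be copied. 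Carrying out the copying so that every $C$ is served simultaneously while each $F(n,B)$ stays inside a member of $\Pee_n$ --- so that the section computation above is untouched --- is the delicate point, and it is where the dense-in-itselfness of $X$ and the c.c.c.-ness and II-favorability of $Y$ are all used at once.
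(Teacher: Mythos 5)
Your skeleton is the same as the paper's: build an open set $W=\bigcup_n\bigcup_{B\in\Bee_n}(B\times F(n,B))$ with each fibre $F(n,B)$ inside a member of the $1$-tiny sequence $\Pee_n$, observe that every vertical section $W_x$ is then contained in a selector's union and hence non-dense, and take $E$ to be the complement. The section half of your argument is correct (and in fact slightly cleaner than the paper's, since you do not need the coherence condition on the labelling to see that $W_x\subseteq\bigcup_n Q_n$ with $Q_n\in\Pee_n$). But the half you yourself flag as ``the main obstacle'' --- making $W$ dense --- is not carried out, and the mechanism you sketch for it does not work. You propose to use that the $1$-tiny sequence ``may be taken to be not tiny,'' so that finite subfamilies of the $\Pee_n$ already have dense union, and then to copy those finite subfamilies into the finitely many (but growing) pieces of $\Bee_n$ below each basic $C$. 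There is no justification for assuming the $1$-tiny sequence is not tiny: $1$-tininess is the weaker property, a tiny sequence is automatically $1$-tiny, and nothing in the hypotheses rules out that every $1$-tiny sequence you can extract is also tiny (indeed the paper elsewhere uses tininess and $1$-tininess of the \emph{same} kind of sequence on the same space). If the sequence were tiny, your finite-subfamilies-per-level assignment would give a $W$ whose trace over some $C$ is exactly a union $\bigcup_n\bigcup\mathcal F_n$ of finite subfamilies, which tininess forces to be non-dense --- so your construction could fail outright.

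The paper avoids this entirely: after arranging that each $U\in\Bee_n$ contains \emph{infinitely} many members of $\Bee_{n+1}$ (not just ``arbitrarily many, growing with $n$''), it labels those members by functions $f_{n+1}$ so that the fibres assigned below any single $U\in\Bee_n$ at level $n+1$ run through \emph{all} of the countable family $\Pee_{n+1}$. Density of $W$ in $C\times Y$ for a basic $C$ is then immediate from density of $\bigcup\Pee_{n+1}$ in $Y$, with no finiteness issue and no appeal to non-tininess. So the missing ingredient in your write-up is precisely this: realize the entire family $\Pee_{n+1}$ below every member of $\Bee_n$, which requires upgrading your ``$|\{B\in\Bee_n:B\subseteq C\}|\to\infty$'' to ``each member of $\Bee_n$ contains infinitely many members of $\Bee_{n+1}$'' (a reduction the dense-in-itself hypothesis is meant to license). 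Without that, the density of $W$ --- and hence the nowhere-density of $E$ --- remains unproved.
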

\begin{proof}
By Theorem \ref{g2iff1ts} there is a 1-tiny sequence $\{\Pee_n:n\in\omega\}$. Since the  space $Y$ satisfies c.c.c. we can assume that each $\Pee_{n+1}$ is a countable, open, pairwise disjoint family. We can also assume that
every $\Pee_{n+1}$ is a refinement of $\Pee_n$, i.e. each member of $\Pee_{n+1}$ is a subset of a member of $\Pee_n$. Let $\{V^n_\sigma:\sigma\in{}^n\nat\}$ be such an enumeration of the family $\Pee_n$, that for each $\tau\in{}^{n-1}\nat$, $\{V^n_{\tau\frown k}\colon k\in\nat\}=\Pee_n$.

We can assume that $\Bee_{n+1}$ is a refinement of $\Bee_n$ and  $|\{V\in \Bee_{n+1}:V\subseteq U\}|\geq\omega$ for each 
$U\in \Bee_{n}$. For each $n\in\nat$ fix a function $f_n:\Bee_n\to {}^n\nat$   such that for a fixed $U\in\Bee_n$ we have 

 $$(1)\; \{f_{n+1}(V):V\in \Bee_{n+1}\mbox{ and }V\subseteq U\}=f_n(U)^\frown\nat.$$

\noindent
Therefore, there holds the condition:

	$$(2)\;\text{ if }V\subset U\text{ then }f_{n+1}(V)\supset f_n(U)\text{ for every }V\in\Bee_{n+1} \text{ and }U\in\Bee_n.$$

 Consider an open set
 $$F=\bigcup\{\bigcup\{U\times V^{n}_{f_n(U)}: U\in\Bee_n\}: n\in\nat\}.$$
 
We shall show that $F$ is dense and $F_x=\{y\in Y: (x,y)\in F\} $ is not dense for each $x\in X$. If $x\in X\setminus 
\bigcap\{\bigcup\Bee_n:n\in\nat\}$ then it is easy to see that $E_x$ is not dense. 
If $x\in\bigcap\{\bigcup\Bee_n:n\in\nat\}$ then by condition $(2)$ there is $\sigma\in{}^\nat\nat$ such that for each $n\in\nat$ there exists $U_n\in\Bee_n$ with 
$f_n(U_n)=\sigma|n$ and $x\in\bigcap\{ U_{n}:n\in\nat\}$, hence $F_x=\bigcup\{ V^n_{\sigma|n}:n\in\nat\}$. Since $V^n_{\sigma|n}\in\Pee_n$ for each $n\in\nat$ and  $\{\Pee_n:n\in\omega\}$ is a 1-tiny sequence the set $\bigcup\{ V^n_{\sigma|n}:n\in\nat\}$ is not dense.

Now we show that $F$ is a dense set. Let $U\times W$ be any open set. Since $\Bee$ is a $\pi$-base there are $n\in\nat$ and  $U_0\in\Bee_n$ such that $U_0\subseteq U$. Let $\sigma=f_n(U_0)$, since $\{V^{n+1}_{\sigma\frown k }\colon k\in\nat\}$ is a dense family, we get that $W\cap V^{n+1}_{\sigma\frown k }\neq \emptyset$ for some $k\in\nat$. By $(1)$, we may take $U_1\subseteq U_0$ such that $U_1\in\Bee_{n+1}$
and $f_{n+1}(U_1)=\sigma^\frown k$. Thus $U_1\times V^{n+1}_{f_{n+1}(U_1)}\cap U\times W\neq \emptyset$.
\end{proof}

Since $\mathbb R$ with natural topology satisfies assumption of the above theorem and every 
universally Kuratowski-Ulam space is c.c.c. space we get the following theorem.

 \begin{theorem}
A II-favorable space is not universally Kuratowski-Ulam space. 
 \end{theorem}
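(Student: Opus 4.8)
The plan is to specialize the previous theorem to $X=\mathbb R$ and then to play the resulting failure of the Kuratowski--Ulam theorem against the definition of a universally Kuratowski--Ulam space, using in addition the standard fact that every universally Kuratowski--Ulam space is c.c.c.\ (see \cite{fem}, \cite{fnr}). So suppose, toward a contradiction, that some space $Y$ is simultaneously II-favorable and universally Kuratowski--Ulam. Being universally Kuratowski--Ulam, $Y$ is c.c.c.; hence $Y$ already meets the hypotheses imposed on the second factor in the previous theorem, and it remains only to check that $\mathbb R$ meets the hypotheses imposed on the first factor.

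For this I would take $X=\mathbb R$ (equivalently the open interval $(0,1)$) with its usual topology. It is obviously dense in itself, so the only issue is to exhibit a $\pi$-base $\Bee=\bigcup_{n\in\omega}\Bee_n$ in which each $\Bee_n$ is a maximal family of pairwise disjoint open sets. I would construct the $\Bee_n$ by recursion: put $\Bee_0=\{(0,1)\}$, and, given $\Bee_n$, let $\Bee_{n+1}$ consist, for every $I\in\Bee_n$, of the (countably many) connected components of $I\setminus C_I$, where $C_I\subseteq I$ is a Cantor-type nowhere dense subset of $I$ chosen so that every component of $I\setminus C_I$ has length at most $|I|/3$. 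Then each $\Bee_n$ is a maximal disjoint family (its union is dense and open, since $C_I$ has empty interior), $\Bee_{n+1}$ refines $\Bee_n$, each member of $\Bee_n$ contains infinitely many members of $\Bee_{n+1}$, and the diameters of the members of $\Bee_n$ are at most $3^{-n}$ and hence tend to $0$; so every nonempty open subset of $(0,1)$ contains some member of $\bigcup_n\Bee_n$, i.e.\ $\bigcup_n\Bee_n$ is a $\pi$-base. Thus $X=\mathbb R$ satisfies the hypotheses of the previous theorem.

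Applying the previous theorem with this $X$ and with $Y$, we conclude that the Kuratowski--Ulam theorem fails in $\mathbb R\times Y$; unwinding its proof, there is an open dense set $E\subseteq\mathbb R\times Y$ whose every section $E_x=\{y\in Y:(x,y)\in E\}$ fails to be dense in $Y$. Put $N=(\mathbb R\times Y)\setminus E$. Then $N$ is closed and nowhere dense in $\mathbb R\times Y$, whereas for each $x\in\mathbb R$ the section $N_x=Y\setminus E_x$ contains the nonempty open set $Y\setminus\cl E_x$ and is therefore not nowhere dense in $Y$. Consequently
$$\{x\in\mathbb R:\{y\in Y:(x,y)\in N\}\text{ is not nowhere dense in }Y\}=\mathbb R,$$
which is not meager in itself, by the Baire category theorem. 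This contradicts the hypothesis that $Y$ is universally Kuratowski--Ulam, and the theorem follows. The only step requiring real care is the simultaneous realization, for $X=\mathbb R$, of the maximality of each $\Bee_n$, the refinement relation, the infinitude of $\{V\in\Bee_{n+1}:V\subseteq U\}$ for $U\in\Bee_n$, and the $\pi$-base property; the Cantor-complement construction above settles all four at once, and everything else is bookkeeping once one invokes the known implication ``universally Kuratowski--Ulam $\Rightarrow$ c.c.c.''.
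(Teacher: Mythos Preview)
Your argument is correct and follows exactly the route the paper takes: the paper derives the theorem in one sentence, observing that $\mathbb R$ with its usual topology satisfies the hypotheses of the preceding theorem and that every universally Kuratowski--Ulam space is c.c.c. You have simply supplied the details the paper omits---an explicit $\pi$-base for $\mathbb R$ built from maximal antichains, and the passage from the dense open $E$ of the preceding proof to a nowhere dense $N$ witnessing the failure of the uK--U property---so there is nothing to correct. (A minor remark: the refinement and infinite-splitting conditions you build into your Cantor-complement construction are not part of the hypothesis of the preceding theorem; its proof shows one may \emph{arrange} them afterwards, so a simpler $\pi$-base such as the dyadic intervals would already suffice.)
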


Following  \cite[p.86 - 91]{ox} recall category measure space. If $X$ is a topological space with finite measure $\mu$ defined on the $\sigma$-algebra $S$ of sets having the Baire property, and if $\mu(E)=0$
if and only if $E$ is of  a meager set, then $(X, S,\mu)$ is called a \textit{category measure space}. An  example of a regular Baire space which is a category measure space, is an open interval $(0,1)$ with Lebesgue measure $\mu_l$ and density topology $\Tee_d$, see \cite{ox}.  For density topology and measurable set $A\subseteq (0,1)$ the following conditions  are equivalent:
\begin{enumerate}
	\item $\mu_l(A)=0$,
	\item $A$ is closed and nowhere dense .
\end{enumerate}
In the space $((0,1),\Tee_d)$ there is a 1-tiny sequence but there is no tiny sequence. Indeed, define
a 1-tiny sequence in the following way: let $\Pee_n=\{U: U\in\Tee_d\mbox{ and } \mu_l(U)\leq \frac 1 {3^n}\}$. If 
$\{U_n:n\in\nat\}$ is a family chosen by player I then $\mu_l(\bigcup\{U_n:n\in\nat\})\leq \frac 1 2$. 
Therefore $\{U_n:n\in\nat\}$ is not a dense family. Now assume that there exists a tiny sequence  
$\{\Pee_n:n\in\nat\}$. In each stage we choose a finite subfamily $\Raa_n\subset \Pee_n$ such that
$ \mu_l(\bigcup\{\bigcup\Raa_i:i\leq n\})\geq 1-\frac 1 n $, hence we get a dense family $\bigcup\{\Raa_n:n\in\nat\}$.

The authors of the paper \cite{dkz} posed a question (Question 4.3): \textit{Does a  player have a winning strategy in the game 
$G$ if and only if the same player has a winning strategy in the game $G_7$.} The author of paper
\cite{sch} showed that if $\cov(\mathcal M)<\mathfrak{d}$ the answer is NO.  We show that  games $G$ and $G_7$ are not equivalent.

\begin{corollary}
The game $G$ is not equivalent to the game $G_7$
\end{corollary}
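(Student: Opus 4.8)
The plan is to use the density topology example recorded in the paragraph immediately preceding the corollary as the single witnessing space. First I would set $X=((0,1),\Tee_d)$, the open unit interval with the density topology $\Tee_d$, and check that $X$ is c.c.c.: every non-empty $\Tee_d$-open set is Lebesgue measurable and contains a density point of itself, hence has positive Lebesgue measure, so any pairwise disjoint family of non-empty open sets must be countable because $\mu_l((0,1))=1<\infty$. This is the only point that genuinely needs a line of care, since it is what makes Theorems \ref{g7iffts} and \ref{g2iff1ts} applicable to $X$.

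Next I would invoke the two facts already established for $X$. On the one hand, $\Pee_n=\{U\in\Tee_d:\mu_l(U)\le 3^{-n}\}$ is a $1$-tiny sequence, because each $\bigcup\Pee_n$ is dense while any selector $\{U_n:n\in\nat\}$ has $\mu_l(\bigcup_n U_n)\le\tfrac12<1$, so its union is closed nowhere dense, in particular not dense. On the other hand, $X$ carries no tiny sequence: given any candidate $\{\Pee_n:n\in\nat\}$ one can greedily pick finite $\Raa_n\subseteq\Pee_n$ with $\mu_l(\bigcup\{\bigcup\Raa_i:i\le n\})\ge 1-\tfrac1n$, making $\bigcup\{\Raa_n:n\in\nat\}$ dense and violating clause (2) of the definition of a tiny sequence. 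Hence, by Theorem \ref{g2iff1ts}, Player II has a winning strategy in the game $G_2$ on $X$, and since $G$ and $G_2$ are equivalent (\cite{dkz}), Player II has a winning strategy in the game $G$ on $X$; whereas by Theorem \ref{g7iffts}, Player II has no winning strategy in the game $G_7$ on $X$.

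Finally I would conclude: on the one space $X$ the status of Player II differs between the two games, so $G$ and $G_7$ cannot be equivalent. I do not expect a real obstacle here, as all of the substance is carried by Theorems \ref{g7iffts} and \ref{g2iff1ts}, the equivalence $G\equiv G_2$, and the density-topology computations already in the text; the main (mild) thing to nail down is the c.c.c.\ verification for $((0,1),\Tee_d)$ so that the two characterization theorems are legitimately in force.

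\begin{proof}
Let $X=((0,1),\Tee_d)$ be the open unit interval equipped with the density topology. Any non-empty open set in $\Tee_d$ is Lebesgue measurable and has positive measure, so a family of pairwise disjoint non-empty open subsets of $X$ is countable; thus $X$ is c.c.c. As noted above, $X$ admits a $1$-tiny sequence, so by Theorem \ref{g2iff1ts} Player II has a winning strategy in the game $G_2$ on $X$, and since the games $G$ and $G_2$ are equivalent \cite{dkz}, Player II has a winning strategy in the game $G$ on $X$. On the other hand $X$ admits no tiny sequence, so by Theorem \ref{g7iffts} Player II has no winning strategy in the game $G_7$ on $X$. Therefore the games $G$ and $G_7$ are not equivalent.
\end{proof}
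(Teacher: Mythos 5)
Your proposal is correct and follows essentially the same route as the paper: the density-topology space $((0,1),\Tee_d)$ admits a $1$-tiny sequence but no tiny sequence, so by Theorems \ref{g2iff1ts} and \ref{g7iffts} Player II wins $G$ (via $G_2$) but not $G_7$ there. The only addition is your explicit c.c.c.\ check for $((0,1),\Tee_d)$, which the paper leaves implicit but which is a reasonable point to make.
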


\begin{proof}
By Theorem \ref{g2iff1ts} a winning strategy of 
II player in the game $G$ is equivalent to the existence of a $1$-tiny sequence  
and by Theorem \ref{g7iffts} the existence of a winning strategy of  player II in the game $G_7$ is equivalent to the existence of a tiny sequence. Since in the space $((0,1),\Tee_d)$ there is a 1-tiny sequence but there is no tiny sequences we get that games $G$ and $G_7$ are not equivalent. 
\end{proof}
Since the game $G_7$ is equivalent to the game $G_4$,  we get the following:

\begin{corollary}
The game $G$ is not equivalent to the game $G_4$
\end{corollary}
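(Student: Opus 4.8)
The plan is simply to chain together two facts that are already on the table. The immediately preceding Corollary establishes that $G$ and $G_7$ are not equivalent: in the concrete space $((0,1),\Tee_d)$ there is a $1$-tiny sequence but no tiny sequence, so by Theorems \ref{g2iff1ts} and \ref{g7iffts} player II has a winning strategy in $G$ (through its equivalence with $G_2$) while player II has no winning strategy in $G_7$. On the other hand, it was recorded in the Introduction that $G_7$ is equivalent to $G_4$, by an argument parallel to the proof that $G$ is equivalent to $G_2$. Since equivalence of games is transitive, these two observations force $G$ and $G_4$ to be inequivalent.

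Concretely, I would rerun the argument on the same witnessing space rather than invoke transitivity abstractly. Because $G_7$ and $G_4$ are equivalent, player II has no winning strategy in $G_4$ in the space $((0,1),\Tee_d)$ either; but, as just recalled, player II \emph{does} have a winning strategy in $G$ there. Hence $G$ and $G_4$ exhibit different behaviour on a single space, which is exactly the failure of equivalence. I would present the corollary's proof in a single sentence of this form, citing the previous Corollary and the $G_7\leftrightarrow G_4$ equivalence.

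There is essentially no obstacle in the proof itself; the statement is a formal consequence of what precedes it. The only point that deserves a word of care is that we use the direction of the $G_7\leftrightarrow G_4$ equivalence which transports ``II has no winning strategy'' — equivalently, ``if II wins $G_4$ then II wins $G_7$'' — and this is precisely (one half of) the equivalence asserted in the Introduction. If one wanted the paper to be fully self-contained at this point, the natural addition would be an explicit proof that $G_7$ and $G_4$ are equivalent, which the text currently leaves to the reader by analogy with the $G$–$G_2$ case.
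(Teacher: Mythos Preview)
Your proposal is correct and follows exactly the paper's approach: the paper derives this corollary in a single line from the preceding corollary ($G$ not equivalent to $G_7$) together with the equivalence of $G_7$ and $G_4$ recorded in the Introduction. Your version is, if anything, more explicit than the paper's, which simply writes ``Since the game $G_7$ is equivalent to the game $G_4$, we get the following'' and states the corollary.
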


\section{Some remarks}
It is known that on the  $\omega_1$ with discrete topology II player has a winning strategy in the game $G_7$,
but one can  pose a question:

\textit{Is it possible to construct a tiny sequence $\{\Pee_n\colon n\in\omega\}$ on a discrete space of the size $\omega_1$
with $|\Pee_n|=\omega $ for all $n\in\omega$ ?}

The following Remark \ref{de} gives us the answer - it is possible if and only if the dominating number is equal
$\omega_1$. This is reformulation of well know results about critical cardinal number,   see 
W. Just, A. W. Miller, M. Scheepers and P. J. Szeptycki \cite{jmss}; D. Fremlin, A. W. Miller \cite{fm}
and B. Tsaban \cite{tsa}. 

Recall that $f\leq^*g$ denotes that for almost all $n\in\omega$ holds $f(n)\leq g(n)$, where $f,g$ are functions defined on natural numbers.
A family $\Raa\subseteq {}^\omega\omega$ is a \textit{dominating} family if for each
$f\in {}^\omega\omega$ there is $g\in \Raa$ such that $f\leq^* g$. The \textit{dominating number} $\mathfrak d$ is the smallest cardinality of a dominating family: $$\mathfrak d =\min\{|\mathcal R|:\mathcal R \mbox{ is dominating }\}.$$

\begin{remark}\label{de}
The smallest cardinality $\kappa$ such that there exists a tiny sequence $\{\Pee_n\colon n\in\omega\}$ on the discrete space of the size $\kappa$ with $|\Pee_n|=\omega $ for all $n\in\omega$ is equal to $\mathfrak{d}$.
\end{remark}

\begin{proof}
Let $X$ be any discrete space for which there exists  a tiny sequence $\{\Pee_n\colon n\in\omega\}$. We can assume that every $\Pee_n$ is a partition of $X$
into countably many blocks $\{X^n_0, X^n_1,\ldots\}$, so we may define for each $x\in X$ a function $f_x\colon \omega\rightarrow \omega$
in the following way: $f_x(n)=k$ whenever $x\in X^n_k$. Take an arbitrary function $f\colon \omega\rightarrow \omega$, and  any $x\in X\setminus \bigcup\{\bigcup\{X^n_{k}\colon k\leq f(n)\}\colon n<\omega\}$, then $f$ is dominated by the function $f_x$. It shows that $\{f_x\colon x\in X\}$ is a dominating family, hence $|X|\geq \mathfrak{d}$.

Now, let $\mathcal{F}\subset \omega^\omega$ be a dominating family of the cardinality $\mathfrak{d}$. Without loss of generality assume that for each function $f\colon\omega\rightarrow \omega$ there is $g\in\mathcal{F}$ such that $f(n)<g(n)$ for all $n<\omega$. We treat $\mathcal{F}$ as a discrete 
topological space. For $n,k\in\omega$ put $A^n_k=\{f\in\mathcal{F}\colon f(n)\leq k\}$ and set $\mathcal{P}_n=\{A^n_k\colon k<\omega\}$.
Of course, each family $\mathcal{P}_n$ is increasing and has the union equal to $\mathcal{F}$. From each $\mathcal{P}_n$ take some single $A^n_{f(n)}$ where $f\colon\omega\rightarrow \omega$. 
If $\bigcup\{A^n_{f(n)}\colon n<\omega\}$ was equal to $\Fee$, then it would contain  such a function $g$ that $g(n)>f(n)$ for  all $n\in\omega$, but it is not the case. Therefore $\{\Pee_n\colon n\in\omega\}$ is a tiny sequence.
\end{proof}

Recall a definition of a Baire number $\cov(\mathcal M)$ for the ideal $\mathcal M$ of meager subsets of real line $\mathbb R$:
$$\cov(\mathcal M)=\min\{|\mathcal A|:\mathcal A\subseteq\mathcal M\mbox { and } \bigcup\mathcal A=\mathbb R\}.$$
T. Bartoszy\'{n}ski \cite{bar} proved that $\cov(\mathcal M)$ is the cardinality of the smallest
family $\mathcal F\subseteq {}^\omega\omega$ such that
$$ \forall(g\in {}^\omega\omega)\exists(f\in\mathcal F)\forall(n\in\omega) f(n)\not =g(n).$$
We get another well known characterization of such families by a 1-tiny sequence.

\begin{remark}\label{cov}
The smallest cardinality $\kappa$ such that there exists a 1-tiny sequence $\{\Pee_n\colon n\in\omega\}$ on the discrete space of the size $\kappa$
with $|\Pee_n|=\omega $ for all $n\in\omega$ is equal to $\cov(\mathcal M).$
\end{remark}
 We give the proof for the sake of completeness. We shall prove that the smallest cardinality of a family $\mathcal F\subseteq {}^\omega\omega$ such that
$$(*)\; \forall(g\in {}^\omega\omega)\exists(f\in\mathcal F)\forall(n\in\omega) f(n)\not =g(n)$$
is equal to the smallest cardinality $\kappa$ such that there exists a 1-tiny sequence $\{\Pee_n\colon n\in\omega\}$ on the discrete space $\kappa$
with $|\Pee_n|=\omega $ for all $n\in\omega$.

\begin{proof}
Let $\mathcal F=\{f_\alpha:\alpha<\kappa\}\subseteq {}^\omega\omega$ be a family with  the property $(*)$. 
Define $A^i_n=\{f\in\mathcal F: f(i)=n\}$ for every $i,n\in\omega$. Let 
$\Pee_i=\{A^i_n:n\in\omega\}$ for $i\in\omega$. We will show that $\{\Pee_i\colon i\in\omega\}$ is a 1-tiny sequence.
Assume that we have chosen  $A^i_{n_i}\in\Pee_i$  for each $i\in\omega$. Define a function 
$g(i)=n_i$ for $i\in\omega$.  Since $\mathcal F$ satisfies $(*)$  there is $f\in
\mathcal F$ such that $f(i)\not =g(i)$ for each $i\in\omega$. Therefore we get $f\in\mathcal F\setminus \bigcup\{A^i_{n_i}:i\in\omega\}$.
 
 Let $\{\Pee_n\colon n\in\omega\}$ be a 1-tiny sequence with  $|\Pee_n|=\omega $ and $\bigcup\Pee_n=\kappa$ for each $n\in\omega$. We can assume that each $\Pee_n$ consists of pairwise disjoint subsets of $\kappa$.
 Let $\{A^n_k: k\in\omega\}$ be a enumeration of $\Pee_n$.  We define a
 function $f_x\in {}^\omega\omega$ for each $x\in\kappa$ in the following way: 
 $f_x(i)=n ,$ where $ x\in A^i_n$ for each $i\in\omega.$
The family $\{f_x: x\in \kappa\}$ satisfies $(*)$. Indeed, let $g\in{}^\omega\omega$ be any function. Since $\{\Pee_n: n\in\omega\}$ is a 1-tiny sequence, choose $x\in\kappa\setminus \bigcup\{A^i_{g(i)}:i\in\omega\}$. Finally, observe that $f_x(i)\not =g(i)$ for every $i\in \omega$.
\end{proof}

We shall recall definition of the bounding number
$$ \mathfrak{b}=\min\left\{|\mathcal F|:\mathcal F\subseteq{}^\omega\omega\mbox{ and }\forall(g\in{}^\omega\omega)\exists(f\in\mathcal F)\neg(f\leq^* g))\right\}$$

We say that a sequence  $\{\Pee_n\colon n\in\omega\}$ of open families in $X$  is a $\mathfrak{b}$-\textit{tiny sequence} if 

\begin{enumerate}
	\item $\bigcup\Pee_n$ is dense in $X$ for all $n\in\omega$;
	\item if $\mathcal F_n$ is a finite subfamily of $\Pee_n$ for each $n\in\omega$, then there exists strictly increasing sequence  
	$\{n_i:i\in\omega\}$ such that 
$$\bigcup\left\{\bigcup\mathcal F_{n_i}:i\in\omega\right\}$$
 is not dense in $X$.
\end{enumerate}

We get the next reformulation of the bounding number.

 \begin{remark}\label{bounding}
The smallest cardinality $\kappa$ such that there exists a \\$\mathfrak{b}$-tiny sequence $\{\Pee_n: n\in\omega\}$ on the discrete space of the size $\kappa$
with $|\Pee_n|=\omega $ for all $n\in\omega$ is equal to $\mathfrak{b}$.
\end{remark}

{\bf Acknowledgment.} The authors are  indebted to the Referee for very careful reading of the paper and  valuable comments.


\begin{thebibliography}{40}

\bibitem{bar} T. Bartoszy\'{n}ski, \textit{Combinatorial aspects of measure and category}, Fund. Math. 127 (1987), no. 3, 225--239.
\bibitem{dkz} P. Daniels, K. Kunen and H. Zhou, \textit{On the open-open game}, Fund. Math. 145 (1994), no. 3, 205--220.
\bibitem{fnr} D. Fremlin, T. Natkaniec and I. Rec\l aw, \textit{Universally Kuratowski-Ulam spaces}, Fund. Math. 165  (2000), no. 3, 239--247.
\bibitem{fm} D. Fremlin, A.W. Miller\textit{ On some properties of Hurewicz, Menger and Rothberger},
Fund. Math. 129  (1988), 17--33.
\bibitem{jmss} W. Just, A.W. Miller, M. Scheepers and P.J. Szeptycki, \textit{The combinatorics of open covers (II)}, Topology Appl. 73 (1996), 241--266.
\bibitem{fem} D. Fremlin, \textit{Universally Kuratowski-Ulam spaces}, a note from:\\ www.essex.ac.uk/maths/staff/fremlin/preprints.htm.
\bibitem{kp7} A. Kucharski, Sz. Plewik,  \textit{Game approach to universally Kuratowski-Ulam spaces}, Topology Appl. 154 (2007), no. 2, 421--427.
\bibitem{kp8}  A. Kucharski and Sz. Plewik,  \textit{Inverse systems and $I$-favorable spaces},  Topology Appl. 156 (2008), no. 1, 110--116.
\bibitem{ku} K. Kuratowski and S. Ulam, \textit{Quelques propri\'et\'es topologiques du produit combinatoire}, Fund. Math. 19 (1932), 247--251.

\bibitem{ox}J. Oxtoby, \textit{Measure and Category}, Springer Verlang, New York (1971).

\bibitem{paw} J. Pawlikowski, \textit{Undetermined sets of point-open games}, Fund. Math. 144 (1994) 279--285.
\bibitem{sch} M. Scheepers,  \textit{Combinatorics of Open Covers (V): Pixley-Roy spaces of
sets of reals, and $\omega$-covers.}, Topology Appl. 102 (2000), no.1, 13--31.
\bibitem{szy} A. Szyma\'{n}ski, \textit{Some applications of tiny sequences}, Proceedings of the 11th winter school on abstract analysis (\v Zelezna Rud\'{a}, 1983). 
Rend. Circ. Mat. Palermo (2) 1984, Suppl. no. 3,   321--328.
\bibitem{tsa} B. Tsaban,  \textit{Strong $\gamma$-sets and other singular spases},  Topology Appl. 153 (2005),  620--639.
\end{thebibliography}
\end{document}